\renewcommand{\le}{\leqslant}
\numberwithin{equation}{section} % Number equations by section
\newtheorem{theorem}{Theorem}[section] % Theorems numbered within sections
\newtheorem{definition}{Definition}[section] % Definitions numbered within sections
\newtheorem{prop}{Proposition}[section]
\numberwithin{equation}{section}
\theoremstyle{definition}
\newcommand\Q{\mathbb{Q}}
\renewcommand\rho{\varrho}
\title[Erd\H{o}s-Kac type theorem for the number of scattering geodesics]{Erd\H{o}s-Kac type theorem for the number of scattering geodesics on modular surface}
\subjclass[2010]{11N45, 11N13, 11F72}
\keywords{Erd\H{o}s-Kac theorems, scattering geodesics, Euler totient function, L-function, Modular surface}
\author{Sudhir Pujahari}
\address{School of Mathematical Sciences, National Institute of Science Education and Research, Bhubaneswar, An OCC of Homi Bhabha National Institute,  P. O. Jatni,  Khurda 752050, Odisha, India.}
\email{spujahari@niser.ac.in}
\author{Punya Plaban Satpathy}
\address{Visitor, School of Mathematical Sciences, National Institute of Science Education and Research, Bhubaneswar, An OCC of Homi Bhabha National Institute,  P. O. Jatni,  Khurda 752050, Odisha, India.}
\email{psatpathy81@gmail.com}
\newcommand{\Ha}{\mathbb{H}}
\newcommand{\M}{\mathcal{M}}
\newcommand{\Mm}{\mathcal{M}_c}
\newcommand{\R}{\mathbb{R}}
\begin{document}

\begin{abstract}
In 1917, Hardy and Ramanujan showed that if $\omega(n)$ is the number of distinct prime factors of a randomly chosen positive integer $n,$ then the normal order of $\omega(n)$ is $\log \log \, n.$  This led Erd\H{o}s and Kac to prove their celebrated result showing a Gaussian behaviour for $\omega(n).$  In this article we prove an Erd\H{o}s-Kac kind result for the number of scattering geodesics on the modular surface with a common sojourn time.

\end{abstract}

\maketitle

\bigskip

%\tableofcontents

\section{Introduction and main results} \label{results}

%The Central limit theorem is one of the most important theorems in mathematics.
In 1917, Hardy and Ramanujan \cite{HardyRamanujan1917} showed that if we randomly choose a natural number $n$, it has ``roughly'' $\log \log n$ many distinct prime factors. Motivated by the result of Hardy and Ramanujan, Erd\H{o}s and Kac \cite{Erdoskac} showed that if $\omega(n)$ denotes the number of distinct prime factors of $n$, then $\omega(n)$ is normally distributed with mean and variance $\log \log n$. 
In other words, they showed that the random variable
$
\frac{\omega(n) - \log \log n}{\sqrt{\log \log n}}
$
has a normal distribution with mean $0$ and variance $1$. This result is famously known in the literature as the Erd\H{o}s-Kac Theorem.
The above studies ignited the development of a new branch of mathematics, known as Probabilistic number theory. Since the proof of the Erd\H{o}s-Kac Theorem, a significant number of similar results have been obtained for different mathematical objects.
In \cite{PMM}, Murty, Murty and Pujahari, proved an all-purpose Erd\H{o}s-Kac theorem.  For a more detailed account, the reader is referred to \cite{PMM}. In this article, we prove an Erd\H{o}s-Kac type theorem for the number of scattering geodesics on the modular surface with a common sojourn time.
To state our main result, we recall the following notions:

\newcommand{\f}{\mathcal{F}}
\subsection{The Modular surface $\M$} For a complex number $z = x+iy \in \mathbb{C}$, we set $\Re(z):=x$ and $\Im(z): = y$. To begin our discussion we introduce the
upper half-plane,
\begin{equation}
    \mathbb{H} = \{ z  \in \mathbb{C} \mid \Im(z)>0 \},
\end{equation} and we assign $\mathbb{H}$ the standard hyperbolic metric \( ds^2 = \frac{dx^2 + dy^2}{y^2} \). It is a standard result that geodesics in \(\mathbb{H}\) are either vertical half-lines perpendicular to the $x$-axis or semicircles with center on the $x$-axis. We consider the action of \(\text{PSL}(2, \mathbb{Z})\) on \(\mathbb{H}\) through linear fractional transformations, where a two by two matrix \( A = \begin{pmatrix} a & b \\ c & d \end{pmatrix} \) with integer entries \( a, b, c, d \in \mathbb{Z} \) satisfying \( ad - bc = 1 \), acts on $\mathbb{H}$ as follows :
\[
z \mapsto \frac{az + b}{cz + d}.
\]
These transformations act as isometries in \(\mathbb{H}\), as demonstrated in \cite[Theorem 3.3.1]{Anderson2005-ox}. We use the following fundamental domain \(\mathcal{F}\)  for the action of \(\text{PSL}(2, \mathbb{Z})\) on \(\mathbb{H}\) defined as:
\[
\mathcal{F} = \{ z \in \mathbb{H} \mid 0 \leq \Re(z) \leq 1, |z| \geq 1, |z - 1| \geq 1 \}.
\]

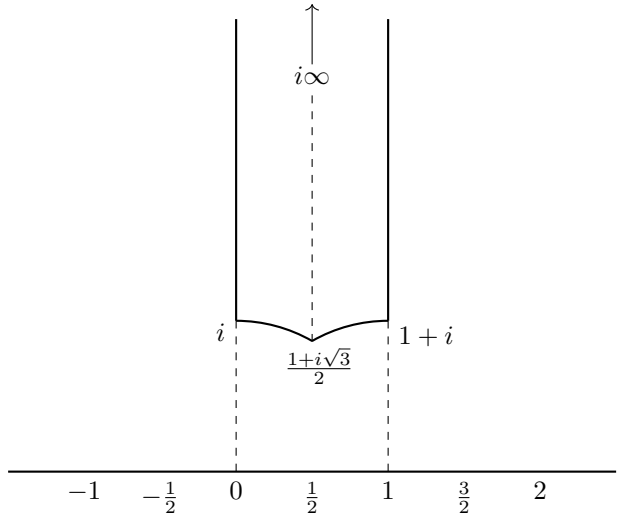
\begin{figure}[htbp]
\centering
\begin{tikzpicture}[scale=2.0, yscale=1.0]  
    \draw[black, thick] (0,1) arc[start angle=90, end angle=60, radius=1];

    \draw[black, thick, shift={(1,0)}] (-0.5,{sqrt(3)/2}) arc (120:90:1);

    \draw[dashed] (0,0) -- (0,1.0); 
    \draw[thick] (1.0,1.0) -- (1.0,3);

    \draw[dashed] (1.0,0) -- (1.0,1.0); 
    \draw[thick] (0,1) -- (0,3);

    \draw[thick] (-1.5,0) -- (2.5,0); 
    
    % Labels on the x-axis
    \node[below] at (-1,0) {\small $-1$};
    \node[below] at (-0.5,0) {\small $-\frac{1}{2}$};
    \node[below] at (0,0) {\small $0$};
    \node[below] at (0.5,0) {\small $\frac{1}{2}$};
    \node[below] at (1,0) {\small $1$};
    \node[below] at (1.5,0) {\small $\frac{3}{2}$};
    \node[below] at (2,0) {\small $2$};
    
    % Labels inside the fundamental domain
    \node[above] at (-0.1,0.8) {\small $i$};
    \node[right] at (0.25,0.7) {\small $\frac{1 + i\sqrt{3}}{2}$};
    \node[right] at (1.0,0.9) {\small $1 + i$}; % Label for the point 1 + i
    
    % Arrow pointing upwards to indicate infinity
     \draw[dashed] (0.5,0.86) -- (0.5,2.5);
    \draw[->] (0.5,2.7) -- (0.5,3.1);
    \node[above] at (0.5,2.5) {\small $i\infty$};

\end{tikzpicture}
\caption{Fundamental domain for the PSL(2,$\mathbb{Z}$) action on $\mathbb{H}$}
\end{figure}

The object in which we are interested is the modular surface defined as the quotient space \(\mathcal{M} = \mathbb{H}/\text{PSL}(2,\mathbb{Z})\) and we assign it with the metric inherited from the upper half-plane. 
%The modular surface
%has three singular points: a cusp and two ramification points of orders 2
%and 3 at $i$ and $\frac{1+\sqrt{3}i}{2}$ respectively.
It is a standard result that \(\mathcal{M}\) is non-compact; however it has a finite area with respect to the measure induced from the hyperbolic metric on \(\mathbb{H}\). 
Consider the natural projection map \(\pi : \mathbb{H} \longrightarrow \mathcal{M}\). By selecting a large positive real number \(T_0 \gg 0\), we can divide \(\mathcal{M}\) into two disjoint regions, \(\mathcal{M}^{T_0}_c\) and \(\mathcal{M}_{\infty}^{T_0}\), where
\begin{equation*}
    \mathcal{M}_c^{T_0} := \pi({\mathcal{F}} \cap \{\Im(z) \leq T_0\}) \ and  \ \mathcal{M}^{T_0}_{\infty} := \pi(\mathcal{F} \cap \{\Im(z) > T_0\}) .
\end{equation*}
Here, \(\mathcal{M}^{T_0}_c\) is a compact subset of \(\mathcal{M}\), which is often called the \textit{compact core} and \(\mathcal{M}_{\infty}^{T_0}\) is an open neighborhood of the cusp in \(\mathcal{M}\). It is a well-known result that any geodesic in \(\mathcal{M}\) is the projection of a geodesic in \(\mathbb{H}\) under the canonical map \(\pi : \mathbb{H} \longrightarrow \mathcal{M}\). We focus on a family of scattering geodesics in $\M$ that traverse the cusp and return, a concept first examined by Victor Guillemin in \cite{Guillemin1976-xr}.

\subsection{Scattering geodesics in $\M$} We recall the following definition of scattering geodesics from \cite{Guillemin1976-xr};
\begin{definition}
   A geodesic $\gamma:=\gamma(t)$ in $\M$ is called a scattering geodesic if it is contained in $\M_{\infty}^{T_0}$ for both large positive and large negative times $t$.
 
Further, the associated \textit{sojourn time} \( l_{\gamma} \) is defined as the total time elapsed from the time the geodesic $\gamma$ first enters the compact core \( \Mm^{T_0} \) to the time it  leaves \( \Mm^{T_0} \) permanently.

\end{definition}

It follows from the work of Guillemin [Corollary 2,Theorem B1, \cite{Guillemin1976-xr}] that 
for a fixed $T_0 \gg 0$, there are a countable number of scattering geodesics in $\M$.
Recently, we \cite{pujaharipunya} have obtained a Prime number theorem kind result for scattering geodesics. For the reader's convenience, we recall some key details from \cite{pujaharipunya} concerning a correspondence between the set of scattering geodesics $\mathcal{S}$ on the modular surface $\M$ and a certain subset of rationals $\mathcal{G}$ contained in the interval $[0,1)$.

\newcommand{\qq}{\mathcal{G}}

\subsection*{Construction of the Set \( \mathcal{G} \)}

For convenience of the reader, we recall the construction of a subset \( \mathcal{G} \subset [0,1) \) of the rational numbers from \cite[\S 1.4]{pujaharipunya}. We start as follows.

For each integer \( q \geq 2 \), define
\[
I_q := \{ p \in \mathbb{Z}^+ \mid 1 \leq p < q,\ \gcd(p, q) = 1 \},
\]
and let \( \phi(q) = |I_q| \) denote Euler’s totient function. Next, define the set
\begin{equation} \label{sq}
    S_q := \{ p \in \mathbb{Z}^+ \mid 1 \leq p < q,\ p^2 \equiv -1 \pmod{q} \},
\end{equation}
and let \( s_q := |S_q| \) denote its cardinality.

For each \( p \in I_q \), let \( y_{p,q} \in I_q \) be the unique element satisfying \( p y_{p,q} \equiv -1 \pmod{q} \). Define
\[
C_q := \{ p \in I_q \mid p \neq y_{p,q} \}.
\]
The elements of \( C_q \) form unordered pairs \( \boxed{p_1, p_2} \) such that \( p_1 p_2 \equiv -1 \pmod{q} \).

Define the subset
\begin{equation}
    C^*_q := \left\{ \min\{p_1, p_2\} \mid \boxed{p_1, p_2} \text{ is a pair in } C_q \right\},
\end{equation}
which has cardinality
\begin{equation} \label{c*q}
    |C^*_q| = \frac{1}{2}(\phi(q) - s_q).
\end{equation}

Set \( \mathcal{G}_1 := \{0\} \). For \( q \geq 2 \), define
\begin{equation}
    \mathcal{G}_q := \left\{ \frac{p}{q} \mid p \in C^*_q \sqcup S_q \right\}.
\end{equation}
The number of elements in \( \mathcal{G}_q \) is given by
\[
|\mathcal{G}_q| = \frac{1}{2}(\phi(q) + s_q).
\]

Finally, with all the above notations, we define the set \( \mathcal{G} \subset [0,1) \) of rational numbers by the disjoint union
\begin{equation} \label{G}
    \mathcal{G} := \bigsqcup_{q=1}^{\infty} \mathcal{G}_q.
\end{equation}

Next, we recall the following result from \cite{pujaharipunya}, which provides necessary and sufficient conditions for when two vertical geodesics in $\mathbb{H}$ project to the same image in $\M$.

\begin{prop} (cf. \cite[Theorem 1.4]{pujaharipunya}) \label{scateq}
    For $w \in [0,1) \cap \Q$, let $\bar{\gamma}_w$ be the unique geodesic in $\Ha$ joining $w$ to $i\infty$. Given distinct $w_i = \frac{p_i}{q_i} \in  (0,1)\cap \Q$ with $\gcd(p_i,q_i) =1, q_i \geq 2$ for $i =1,2$, the geodesics $\bar{\gamma}_{w_1},\bar{\gamma}_{w_2}$ project onto the same geodesic in $\M$ if and only if the following conditions are satisfied
\begin{enumerate}
    \item $q_1 = q_2= q$.
    \item $q$ divides $p_1p_2+1$.
\end{enumerate}
\end{prop}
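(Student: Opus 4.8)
The plan is to reduce the statement to an elementary computation with the group $\mathrm{PSL}(2,\mathbb{Z})$ acting on $\mathbb{H}$, using the fact that two geodesics in $\mathbb{H}$ project to the same geodesic in $\M$ precisely when one is carried to the other by some $A \in \mathrm{PSL}(2,\mathbb{Z})$. Since $\bar\gamma_{w_i}$ is the vertical geodesic from $w_i = p_i/q_i$ to $i\infty$, its endpoints on $\partial\mathbb{H} = \mathbb{R}\cup\{\infty\}$ are $\{w_i, \infty\}$. Thus $\bar\gamma_{w_1}$ and $\bar\gamma_{w_2}$ have the same image in $\M$ if and only if there is $A = \begin{pmatrix} a & b \\ c & d\end{pmatrix} \in \mathrm{SL}(2,\mathbb{Z})$ with $A\{w_1,\infty\} = \{w_2,\infty\}$ as subsets of $\partial\mathbb{H}$. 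The whole argument is then a case analysis on where $\infty$ goes.

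First I would observe that $A(\infty) = a/c$ and $A(w_1) = (a w_1 + b)/(c w_1 + d)$. There are two cases. \emph{Case 1: $A(\infty) = \infty$.} Then $c = 0$, so $A$ is (up to sign) $\begin{pmatrix} 1 & b \\ 0 & 1\end{pmatrix}$, acting by $z \mapsto z + b$; this forces $w_2 = w_1 + b$, impossible for distinct $w_i$ in $(0,1)$ with $b\in\mathbb{Z}$ unless one considers non-integer translations — which do not occur — so this case gives no new identifications among the $w_i \in (0,1)$. \emph{Case 2: $A(\infty) = w_2$ and $A(w_1) = \infty$.} From $A(w_1) = \infty$ we get $c w_1 + d = 0$, i.e. $c p_1 + d q_1 = 0$; since $\gcd(p_1,q_1)=1$ this yields $c = q_1 k$, $d = -p_1 k$ for some nonzero integer $k$, and $\det A = 1$ forces $k = \pm 1$, so we may take $c = q_1$, $d = -p_1$ (absorbing the sign into $A$). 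Then $A(\infty) = a/c = a/q_1$, and the condition $A(\infty) = w_2 = p_2/q_2$ with both fractions in lowest terms forces $q_2 = q_1 =: q$ and $a \equiv p_2 \pmod q$. This already gives condition (1). For condition (2), use $\det A = ad - bc = 1$: with $d = -p_1$, $c = q$, this reads $-a p_1 - b q = 1$, so $a p_1 \equiv -1 \pmod q$; combining with $a \equiv p_2 \pmod q$ gives $p_1 p_2 \equiv -1 \pmod q$, i.e. $q \mid p_1 p_2 + 1$.

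Conversely, I would check that conditions (1) and (2) suffice: given $q_1 = q_2 = q$ and $q \mid p_1 p_2 + 1$, set $c = q$, $d = -p_1$, and choose $a \equiv p_2 \pmod q$ together with the corresponding $b$ so that $ad - bc = 1$ — this is solvable exactly because $p_1 p_2 \equiv -1 \pmod q$ makes $a p_1 \equiv -1 \pmod q$, so $a p_1 + 1$ is divisible by $q$ and we may set $b = -(a p_1 + 1)/q$. One verifies $A(w_1) = \infty$ and $A(\infty) = p_2/q = w_2$, so $A$ maps $\bar\gamma_{w_1}$ onto $\bar\gamma_{w_2}$, and hence they have the same image in $\M$.

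The main obstacle is bookkeeping rather than conceptual: one must be careful that working in $\mathrm{PSL}$ rather than $\mathrm{SL}$ means the sign of $k$ (equivalently, replacing $A$ by $-A$) is immaterial, and one must confirm that the "swap" case (Case 2) is the only source of identifications — that is, that Case 1 genuinely contributes nothing for $w_i \in (0,1)$ with denominator $\ge 2$. I would also note that the hypotheses $q_i \ge 2$ and $w_i \in (0,1)$ are used to guarantee the fractions are in lowest terms with positive denominator, so that equality of the reduced fractions $a/q_1 = p_2/q_2$ genuinely forces $q_1 = q_2$ and $a \equiv p_2$. Since this is cited as \cite[Theorem 1.4]{pujaharipunya}, the proof here is a recollection of that argument, and the computation above is exactly the content.
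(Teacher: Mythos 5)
The paper does not prove this proposition; it is quoted verbatim from the authors' companion paper \cite[Theorem 1.4]{pujaharipunya}, so there is no in-text argument to compare against. Your proof is correct and complete: the reduction to $A\{w_1,\infty\}=\{w_2,\infty\}$ for some $A\in\mathrm{PSL}(2,\mathbb{Z})$, the elimination of the parabolic case for distinct $w_i\in(0,1)$, the derivation $c=q_1$, $d=-p_1$, $a=p_2$ forcing $q_1=q_2$ and $p_1p_2\equiv-1\pmod q$, and the explicit matrix $\begin{pmatrix} p_2 & -(p_1p_2+1)/q\\ q & -p_1\end{pmatrix}$ for the converse are exactly the standard argument one expects the cited reference to contain.
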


Given a scattering geodesic $\gamma \in \M$, it can be lifted to a geodesic $\bar{\gamma}$ in $\mathbb{H}$ connecting a rational point $w$ to $i\infty$. By repeatedly applying the translation map $z \mapsto z+1$, one can choose $w \in \Q \cap [0,1)$. Proposition~\ref{scateq} can then be used to establish a bijection between the set of scattering geodesics $\mathcal{S}$ on the modular surface $\M$ and the set $\qq$ defined in \eqref{G} (see \cite[Theorem~1.5]{pujaharipunya} for details). Then we can immediately deduce the following result from \cite[Theorem~1.5]{pujaharipunya}.
\begin{theorem} \label{scat}
    Let \( \mathcal{S} \) be the set of scattering geodesics in \( \M \). For each integer \( q \geq 1 \), there exists a subset \( \mathcal{S}_q \subset \mathcal{S} \) consisting of 
    $ n_q := \frac{\phi(q) + s_q}{2} $
    distinct scattering geodesics, each with common sojourn time \( 2 \log(qT_0) \). Here, \( \phi \) is Euler's totient function, \( s_1 = 1 \), and for \( q \geq 2 \), \( s_q \) denotes the cardinality of the set
    \[
        \left\{ p \mid 1 \leq p < q,\ p^2 \equiv -1 \pmod{q} \right\}.
    \]
    Moreover, the full set \( \mathcal{S} \) can be written as the disjoint union
    \[
        \mathcal{S} = \bigsqcup_{q=1}^\infty \mathcal{S}_q.
    \]
\end{theorem}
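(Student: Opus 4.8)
The plan is to obtain the theorem by transporting \cite[Theorem~1.5]{pujaharipunya}, which asserts a bijection between the set $\mathcal{S}$ of scattering geodesics and the set $\qq$ of \eqref{G}, through that bijection, and to define $\mathcal{S}_q$ to be the subset of $\mathcal{S}$ corresponding to $\mathcal{G}_q$. First I would recall why the bijection exists: a scattering geodesic $\gamma$ lifts to a geodesic in $\Ha$ that escapes into the cusp for large $|t|$, so it is a vertical half-line with a rational finite endpoint, and after conjugating by the translations $z\mapsto z+1$ we may assume this endpoint is $w=p/q\in[0,1)\cap\Q$ with $\gcd(p,q)=1$. Proposition~\ref{scateq} then records exactly when two normalised endpoints $p_1/q_1$, $p_2/q_2$ with $q_i\ge 2$ give the same geodesic in $\M$, namely when $q_1=q_2=q$ and $q\mid p_1p_2+1$; the endpoint $w=0$ (the case $q=1$) accounts for the lone geodesic going straight up the cusp. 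Hence $\mathcal{S}$ is in bijection with the equivalence classes of $\bigsqcup_{q\ge1}\{p/q : \gcd(p,q)=1,\ 0\le p<q\}$ under this relation, and by construction $\mathcal{G}_q$ is a complete, irredundant set of representatives for the classes of denominator $q$; I set $\mathcal{S}_q$ to be the image of $\mathcal{G}_q$.

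Next I would carry out the counting. For $q\ge 2$ the map $p\mapsto y_{p,q}$, where $py_{p,q}\equiv -1\pmod q$, is an involution of $I_q$ whose fixed points are exactly the elements of $S_q$ (those $p$ with $p^2\equiv -1\pmod q$), of which there are $s_q$, while its other orbits are the $\tfrac12(\phi(q)-s_q)$ two-element sets constituting $C_q$, one member of each being retained in $C_q^\ast$. Therefore, using \eqref{c*q}, $|\mathcal{G}_q|=|C_q^\ast|+|S_q|=\tfrac12(\phi(q)-s_q)+s_q=\tfrac12(\phi(q)+s_q)=n_q$. For $q=1$ one has $\phi(1)=1$ and, by the stated convention, $s_1=1$, so $n_1=1$, which matches $\mathcal{G}_1=\{0\}$ and the single cusp-traversing geodesic. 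Thus $|\mathcal{S}_q|=n_q$, and since $\qq=\bigsqcup_{q\ge 1}\mathcal{G}_q$ is a disjoint union, so is $\mathcal{S}=\bigsqcup_{q\ge 1}\mathcal{S}_q$.

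It remains to compute the sojourn time of a geodesic in $\mathcal{S}_q$, and this is the step requiring the most care, although it is already contained in the proof of \cite[Theorem~1.5]{pujaharipunya}. Lifting such a geodesic to the vertical half-line from $p/q$ to $i\infty$, one picks a matrix in $\mathrm{PSL}(2,\Z)$ with lower-left entry $q$ carrying $p/q$ to $i\infty$; it sends the point $p/q+i\varepsilon$ to a point of height $\sim 1/(q^2\varepsilon)$, so the geodesic last leaves the compact core $\Mm^{T_0}$ near the $p/q$-end at height $1/(q^2T_0)$ and first enters it near the $i\infty$-end at height $T_0$, whence the enclosed arc has length $\log T_0-\log\bigl(1/(q^2T_0)\bigr)=2\log(qT_0)$. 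The factor $q^2$ depends only on the denominator, so this sojourn time is common to all geodesics in $\mathcal{S}_q$; and since $q\mapsto 2\log(qT_0)$ is strictly increasing, geodesics with different denominators have different sojourn times, re-confirming that the $\mathcal{S}_q$ are pairwise disjoint. The only genuinely delicate points are reconciling the case $q=1$ with the convention $s_1=1$ and checking that the normalisation $w\in[0,1)$ is compatible with the hypotheses of Proposition~\ref{scateq}; past those, the theorem is an immediate transcription of \cite[Theorem~1.5]{pujaharipunya}.
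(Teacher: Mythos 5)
Your proposal is correct and follows essentially the same route as the paper, which deduces the theorem directly from the bijection $\mathcal{S}\leftrightarrow\mathcal{G}$ of \cite[Theorem~1.5]{pujaharipunya} together with the count $|\mathcal{G}_q|=\tfrac12(\phi(q)+s_q)$ coming from the involution $p\mapsto y_{p,q}$. You merely flesh out details (the sojourn-time computation $2\log(qT_0)$ and the $q=1$ convention) that the paper delegates entirely to the cited reference, and your computations there are accurate.
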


We now state the main result of this paper.

\begin{theorem} \label{main}
    For any $a \in \R$ we have,

\begin{equation*}
    \lim_{x \longrightarrow \infty} \frac{1}{x}\bigg|\bigg\{1\leq q \leq x \mid \omega(n_q) - \frac{1}{2}(\log \log x)^2 \leq \frac{a}{\sqrt{3}}(\log \log x)^{3/2} \bigg\}\bigg| = \Phi(a).
\end{equation*}
 where $\displaystyle \Phi(a): = \frac{1}{\sqrt{2\pi}} \int_{-\infty}^a e^{-\frac{y^2}{2}}dy$ is the cumulative distribution function for the standard normal distribution.
\end{theorem}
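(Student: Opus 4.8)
The plan is to reduce Theorem~\ref{main} to the classical theorem of Erd\H{o}s and Pomerance~\cite{ErdosPomerance} on the normal number of prime factors of Euler's function, which states that, as $q$ runs over $[1,x]$ with $x\to\infty$,
\[
\frac{\omega(\phi(q))-\tfrac12(\log\log x)^2}{\tfrac{1}{\sqrt3}(\log\log x)^{3/2}}
\]
has limiting distribution $\Phi$. By Theorem~\ref{scat}, $n_q=\tfrac12(\phi(q)+s_q)$, where $s_q$ is the number of square roots of $-1$ modulo $q$. The function $q\mapsto s_q$ is multiplicative, with $s_2=1$, $s_{2^a}=0$ for $a\ge2$, $s_{p^a}=2$ for $p\equiv1\pmod4$ and $s_{p^a}=0$ for $p\equiv3\pmod4$; hence $s_q\ne0$ only when $q=m$ or $q=2m$ with $m$ an integer all of whose prime factors are $\equiv1\pmod4$. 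A standard estimate (Selberg--Delange, or already a crude sieve upper bound) shows that the number of such $q\le x$ is $O\!\left(x(\log x)^{-1/2}\right)=o(x)$, so outside a set of $q$ of density zero we have $n_q=\phi(q)/2$.

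For these $q$ one has $\omega(n_q)=\omega(\phi(q)/2)=\omega(\phi(q))+O(1)$, the $O(1)$ absorbing only the possible loss of the prime $2$. Discarding the further $o(x)$ integers $q<\sqrt{x}$, we may also replace $\log\log x$ by $\log\log q$ in the centering, since then $(\log\log q)^2=(\log\log x)^2+O(\log\log x)$. Both corrections, $O(1)$ and $O(\log\log x)$, are $o\!\left((\log\log x)^{3/2}\right)$, so for all surviving $q$ the normalized statistic attached to $\omega(n_q)$ differs by $o(1)$ from the one attached to $\omega(\phi(q))$. Invoking the Erd\H{o}s--Pomerance theorem and using the continuity of $\Phi$ to absorb this $o(1)$ shift --- by sandwiching the event between those with parameter $a-\varepsilon$ and $a+\varepsilon$ --- yields Theorem~\ref{main}.

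For a self-contained argument one would instead reprove the Erd\H{o}s--Pomerance statement in the shape needed here. The skeleton is: write $\omega(\phi(q))=\sum_{p\mid q}\omega(p-1)+E_q$, where $E_q$ is a nonnegative error coming from prime powers $p^2\mid q$ and, mainly, from primes $\ell$ dividing $p-1$ for several $p\mid q$; show $E_q$ is negligible on average; then compute, via Mertens-type sums over primes in arithmetic progressions together with the fact that $\log\log p$ is nearly uniformly distributed on $[0,\log\log x]$ as $p$ ranges over the prime divisors of a typical $q\le x$, that $\sum_{p\mid q}\omega(p-1)$ has mean $\sim\tfrac12(\log\log x)^2$ and variance $\sim\tfrac13(\log\log x)^3$; and finally verify that all higher centred moments agree asymptotically with those of a Gaussian, e.g.\ by the moment method in the form of the all-purpose Erd\H{o}s--Kac theorem of \cite{PMM}. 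The genuine obstacle is concentrated in this last part: establishing enough quasi-independence of the variables $\{\omega(p-1):p\mid q\}$ to obtain both the Gaussian shape and the precise variance constant $\tfrac13$; the reductions in the first two paragraphs are routine.
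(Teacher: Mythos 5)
Your proposal is correct and follows essentially the same route as the paper: reduce to the Erd\H{o}s--Pomerance theorem for $\omega(\phi(q))$ by showing that the set of $q\le x$ with $s_q\ne 0$ has density zero (the paper proves the sharper asymptotic $\sim \alpha x/\sqrt{\log x}$ via a Tauberian theorem, but only the $o(x)$ bound is used), noting that elsewhere $\omega(n_q)=\omega(\phi(q)/2)$ differs from $\omega(\phi(q))$ by at most $1$, and absorbing this bounded shift into the normalization via continuity of $\Phi$. Your detour through $\log\log q$ versus $\log\log x$ is unnecessary (both statements are already centered at $\log\log x$), and your third paragraph sketches a reproof of Erd\H{o}s--Pomerance that the paper, like your main argument, simply cites.
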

\pagebreak

 \begin{figure}[h]
    \centering
    \includegraphics[width=1.0\textwidth]{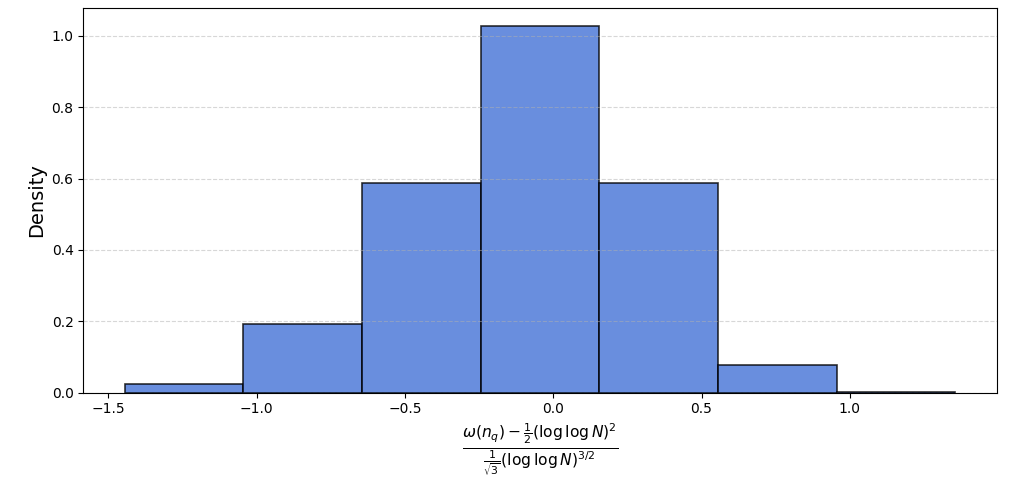}
    \caption{Density Histogram of $\displaystyle \frac{\omega(n_q) -\frac{1}{2}(\log \log N)^2 }{\frac{1}{\sqrt{3}}(\log \log N)^{3/2}}$ with \\ $1 \leq q \leq N=10^7$ .}
    \label{Erdos kac hist}
\end{figure}
The Python script used to generate the plot shown in Figure 2 is publicly available at \href{https://github.com/PunyaPlabanSatpathy/Erdos-kac-plot}{GitHub repository}.

In the next section, we discuss the preliminary results needed to prove our main result.
\section{Preliminaries}

In this section, we discuss the results that we need to prove Theorem \ref{main}. We start with a discussion of the arithmetic function $s_q$ defined in Theorem \ref{scat}. Recall that we have set, $s_1 = 1$ and for $q \geq 2$, $s_q$ denotes the cardinality of the set
\begin{equation*}
    \{p \mid 1\leq p < q, p^2 \equiv -1 \pmod{q}\}.
\end{equation*}

We recall the following result, which was established in \cite[Theorem 2.2]{pujaharipunya};
\newcommand{\kk}{ \mathcal{O}}
\begin{prop}\label{sqdes}
    Let $s_q$ be the arithmetic function defined in Theorem \ref{scat}. Then $s_q \neq 0$ if and only if $q \in \kk$ or $q/2 \in \kk$, where $\kk$\footnote{We also make the convention that $1 \in \kk$.} is the subset of natural numbers such that $n \in \kk \iff $ all the prime factors of $n$ are of the form $4k+1$. 
\end{prop}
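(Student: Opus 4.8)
The plan is to reduce the question of whether $s_q \neq 0$---that is, whether the congruence $p^2 \equiv -1 \pmod q$ admits any solution at all---to a condition on the prime-power factorisation of $q$, and then to translate that arithmetic condition into the language of the set $\mathcal{O}$. Since $s_q$ is merely the cardinality of the solution set, I only need to decide solvability, not to count solutions; this lets me work entirely with existence.

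First I would write $q = \prod_i \ell_i^{e_i}$ and invoke the Chinese Remainder Theorem: the congruence $p^2 \equiv -1 \pmod q$ is solvable if and only if $p^2 \equiv -1 \pmod{\ell_i^{e_i}}$ is solvable for every $i$. This localises the problem at each prime power. Next I would dispose of each local factor. For an odd prime $\ell$, Euler's criterion shows that $-1$ is a quadratic residue modulo $\ell$ precisely when $\ell \equiv 1 \pmod 4$; Hensel's lemma (applicable because at any root $x_0$ the derivative $2x_0$ of $x^2+1$ is a unit modulo the odd prime $\ell$) then lifts a solution mod $\ell$ to a solution mod $\ell^{e}$, while conversely any solution mod $\ell^{e}$ reduces to one mod $\ell$. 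Hence the local condition at an odd prime $\ell$ is simply $\ell \equiv 1 \pmod 4$, independent of the exponent.

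The prime $2$ is the exceptional case and must be treated by hand: $p^2 \equiv -1 \pmod 2$ holds (take $p=1$), but $p^2 \equiv -1 \equiv 3 \pmod 4$ is impossible, since squares are $0$ or $1$ modulo $4$, and this obstruction persists for every higher power of $2$. Thus the local condition at $2$ is exactly $v_2(q) \le 1$, i.e. $4 \nmid q$. Combining the local conditions, solvability of $p^2 \equiv -1 \pmod q$ is equivalent to the pair of requirements that $4 \nmid q$ and that $\ell \equiv 1 \pmod 4$ for every odd prime $\ell \mid q$.

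Finally I would translate this into the claimed form by splitting on $v_2(q)$. If $q$ is odd the condition says that every prime factor of $q$ is $\equiv 1 \pmod 4$, i.e. $q \in \mathcal{O}$. If $q \equiv 2 \pmod 4$ it says that every prime factor of $q/2$ is $\equiv 1 \pmod 4$, i.e. $q/2 \in \mathcal{O}$. And if $4 \mid q$ both sides fail, since then $q \notin \mathcal{O}$ and $q/2$ is even, so $q/2 \notin \mathcal{O}$ as well. The convention $1 \in \mathcal{O}$ together with the value $s_1 = 1$ makes the degenerate case $q=1$ consistent. The only genuinely delicate point is the anomalous behaviour at the prime $2$, where Hensel lifting fails; it is precisely the distinction between $v_2(q) \in \{0,1\}$ and $v_2(q) \ge 2$ that produces the dichotomy ``$q \in \mathcal{O}$ or $q/2 \in \mathcal{O}$'', while everything else is routine.
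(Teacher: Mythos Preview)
Your argument is correct and is the standard proof: CRT localises to prime powers, Euler's criterion plus Hensel handles odd primes, and the direct check at $2$ gives the $v_2(q)\le 1$ condition, after which the translation into the ``$q\in\mathcal{O}$ or $q/2\in\mathcal{O}$'' dichotomy is immediate. The paper itself does not prove this proposition but merely quotes it from \cite[Theorem~2.2]{pujaharipunya}, so there is no in-paper proof to compare against; your self-contained argument is exactly what one would expect such a reference to contain.
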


For $x \geq 1$, let $A(x)$ be the cardinality of the following set,
\begin{equation*}
    \{1\leq q \leq x \mid s_q \neq 0\}.
\end{equation*}
Our first result involves understanding the asymptotics of $A(x)$ for large $x$. More specifically, we have the following;

\begin{prop}\label{sqes}
    For sufficiently large $x$ we have,

\begin{equation}
    A(x) : = |\{1\leq q \leq x \mid s_q \neq 0\}| \sim \frac{\alpha x}{\sqrt{\log x}},
\end{equation}    
where
\begin{equation*}
    \alpha =\frac{3}{2\pi}\prod_{p \equiv 1 \bmod{4}} (1-p^{-2})^{-1/2}.
\end{equation*}
\end{prop}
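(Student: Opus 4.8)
The key observation is that by Proposition~\ref{sqdes}, $s_q \neq 0$ precisely when $q \in \mathcal{O}$ or $q/2 \in \mathcal{O}$, where $\mathcal{O}$ is the set of integers all of whose prime factors are $\equiv 1 \pmod 4$. So $A(x)$ counts integers $q \le x$ that are either in $\mathcal{O}$ or twice an element of $\mathcal{O}$. Writing $B(x) := |\{n \le x : n \in \mathcal{O}\}|$, we have $A(x) = B(x) + B(x/2)$, because $q \in \mathcal{O}$ and $q = 2m$ with $m \in \mathcal{O}$ are mutually exclusive (an element of $\mathcal{O}$ is odd, as $2 \not\equiv 1 \bmod 4$). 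So the whole problem reduces to the asymptotics of $B(x)$.

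**Main step: counting $\mathcal{O}$ via Landau's method / a Tauberian theorem.**

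The plan is to estimate $B(x)$ by studying the Dirichlet series $F(s) := \sum_{n \in \mathcal{O}} n^{-s} = \prod_{p \equiv 1 \bmod 4} (1 - p^{-s})^{-1}$. This is a classic ``Landau-type'' counting problem for integers composed of primes in a fixed arithmetic progression; the density of such primes is $1/2$, so one expects $B(x) \sim c\, x / \sqrt{\log x}$. To make this rigorous I would compare $F(s)$ with $L(s,\chi_{-4})^{1/2}$ (equivalently with $\zeta(s)^{1/2}$ twisted appropriately): writing
\begin{equation*}
F(s) = \prod_{p \equiv 1(4)}(1-p^{-s})^{-1}, \qquad \zeta(s)L(s,\chi_{-4}) = \prod_{p\equiv 1(4)}(1-p^{-s})^{-2}\cdot \frac{1}{1-2^{-s}},
\end{equation*}
so that $F(s)^2 = \zeta(s) L(s,\chi_{-4}) (1 - 2^{-s}) \cdot H_1(s)$ with $H_1$ holomorphic and non-vanishing for $\Re s > 1/2$. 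Hence $F(s) = (s-1)^{-1/2} G(s)$ where $G$ extends holomorphically and non-vanishingly to a neighbourhood of $\Re s \ge 1$ (using that $L(s,\chi_{-4})$ is entire and nonzero at $s=1$, with $L(1,\chi_{-4}) = \pi/4$). Applying the Selberg--Delange method (or a Tauberian theorem of Landau/Raikov type for Dirichlet series with a branch-point singularity $(s-1)^{-1/2}$) gives
\begin{equation*}
B(x) \sim \frac{C\, x}{\sqrt{\log x}}, \qquad C = \frac{1}{\Gamma(1/2)} \lim_{s \to 1^+}(s-1)^{1/2} F(s) = \frac{1}{\sqrt{\pi}} \cdot \sqrt{\,\lim_{s\to 1^+}(s-1) F(s)^2\,}.
\end{equation*}
Then $\lim_{s\to 1}(s-1)F(s)^2 = L(1,\chi_{-4})(1 - 2^{-1}) H_1(1) = \frac{\pi}{4}\cdot\frac12\cdot H_1(1)$, and a direct Euler-product manipulation identifies $\frac{\pi}{4}\cdot\frac12\cdot H_1(1) = \frac{\pi}{8}\prod_{p\equiv 1(4)}(1-p^{-2})^{-1}\cdot(\text{elementary factors})$, which one checks collapses to give $C = \frac{1}{2\pi}\prod_{p\equiv 1(4)}(1-p^{-2})^{-1/2}$.

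**Finishing.**

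Given $B(x) \sim C x/\sqrt{\log x}$, we get $B(x/2) \sim C (x/2)/\sqrt{\log(x/2)} \sim \frac{1}{2} C x/\sqrt{\log x}$ since $\sqrt{\log(x/2)} \sim \sqrt{\log x}$. Therefore $A(x) = B(x) + B(x/2) \sim \frac{3}{2} C \frac{x}{\sqrt{\log x}} = \frac{3}{2}\cdot\frac{1}{2\pi}\prod_{p\equiv 1(4)}(1-p^{-2})^{-1/2}\cdot\frac{x}{\sqrt{\log x}} = \frac{\alpha x}{\sqrt{\log x}}$ with $\alpha$ exactly as stated.

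**Where the difficulty lies.**

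The routine bookkeeping is the Euler-product identity pinning down the constant; I'd expect to double-check the factor of $2^{-s}$ and the value $L(1,\chi_{-4}) = \pi/4$ carefully, since the final constant $\alpha$ is sensitive to these. The genuine analytic input — and the step I would cite rather than reprove — is the Selberg--Delange / Landau Tauberian theorem that converts the singularity $F(s) \sim G(1)(s-1)^{-1/2}$ into the asymptotic $B(x) \sim \frac{G(1)}{\Gamma(1/2)} x/\sqrt{\log x}$; the hypotheses to verify are the holomorphic continuation of $G$ slightly past $\Re s = 1$ and suitable growth bounds on vertical lines, both of which follow from standard facts about $\zeta$ and $L(s,\chi_{-4})$. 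A clean alternative, if one wants to avoid Selberg--Delange machinery, is to invoke a ready-made theorem on the count of integers with all prime factors in a given arithmetic progression (e.g.\ the classical results going back to Landau and Wirsing), which directly yields the $x/\sqrt{\log x}$ order with the Euler-product constant.
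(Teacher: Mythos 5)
Your proposal follows essentially the same route as the paper: both use Proposition~\ref{sqdes} to identify $A(x)$ with a count of elements of $\mathcal{O}$ and $2\mathcal{O}$, square the associated Dirichlet series so as to compare it with $\zeta(s)L(s,\chi)$ for the nonprincipal character modulo $4$, and then invoke a Tauberian theorem for the resulting $(s-1)^{-1/2}$ singularity (the paper cites Kable's variation of the Ikehara--Delange theorem, which plays the role of your Selberg--Delange/Landau input). The only structural difference is cosmetic: the paper keeps the factor $(1+2^{-s})$ inside a single Dirichlet series and applies the Tauberian theorem once, whereas you split $A(x)=B(x)+B(x/2)$ and apply it to $B$ alone; these are equivalent.

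Two bookkeeping slips, both in the part you yourself flagged as needing care. First, your displayed identity for $\zeta(s)L(s,\chi_{-4})$ omits the factor $\prod_{p\equiv 3\bmod 4}(1-p^{-2s})^{-1}$; this is harmless since it is exactly what your $H_1(s)$ absorbs. Second, your stated value $C=\tfrac{1}{2\pi}\prod_{p\equiv 1 \bmod 4}(1-p^{-2})^{-1/2}$ is off by a factor of $2$: one computes $\lim_{s\to 1}(s-1)F(s)^2=\tfrac{\pi}{8}\prod_{p\equiv 3\bmod 4}(1-p^{-2})=\tfrac{1}{\pi}\prod_{p\equiv 1\bmod 4}(1-p^{-2})^{-1}$, whence $C=\tfrac{1}{\pi}\prod_{p\equiv 1\bmod 4}(1-p^{-2})^{-1/2}$ and $\tfrac{3}{2}C=\alpha$ as required. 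As written, your final line asserts $\tfrac{3}{2}\cdot\tfrac{1}{2\pi}=\tfrac{3}{2\pi}$, which is false; with the corrected $C$ the argument closes and agrees with the paper's constant.
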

\begin{proof}
    Consider the L-function $F(s)$ defined as follows,
 \begin{equation*}
     F(s) = \sum_{q=1}^{\infty}\frac{a_q}{q^s},
 \end{equation*}   
 where $a_q =1$ if $s_q \neq 0$ and $a_q =0$ otherwise. Then
 \begin{equation*}
     A(x) = \sum_{q\leq x}a_q.
 \end{equation*}
  We can rewrite $F(s)$ as follows using proposition \ref{sqdes},

\begin{equation*}
    F(s) = \sum_{q \in \kk} \frac{1}{q^s} +\sum_{q/2 \in \kk } \frac{1}{q^s}  = (1+2^{-s}) \sum_{q \in \kk}\frac{1}{q^s}.
\end{equation*}
Note that $\displaystyle \sum_{q \in \kk}\frac{1}{q^s}$ and therefore $F(s)$ converges absolutely for $\Re(s) >1 $ and consequently a holomorphic function in the region $\Re(s)>1$. It is also clear from heuristic arguments that $F(s)$ has singular behaviour at $s=1$. In order to understand the nature of this singularity at $s=1$, we look at $(F(s))^2$.

\begin{align*}
 (F(s))^2& =(1+2^{-s})^2\prod_{p \equiv 1 \bmod{4}}(1-p^{-s}) ^{-2} \\
 & = (1+2^{-s})^2\prod_{p \equiv 1 \bmod{4}} \frac{1-p^{-2s}}{(1-p^{-s})^2} \bigg(\prod_{p \equiv 1 \bmod{4}} (1-p^{-2s})^{-1}\bigg) \\
 & = (1+2^{-s})^2\prod_{p \equiv 1 \bmod{4}} \frac{1+p^{-s}}{1-p^{-s}}   \bigg(\prod_{p \equiv 1 \bmod{4}} (1-p^{-2s})^{-1}\bigg) \\
 & = (1+2^{-s}) \prod_{p}(1+p^{-s}) \bigg(\prod_{p \equiv 1 \bmod{4}} (1-p^{-s})^{-1}\prod_{p \equiv 3 \bmod{4}} (1+p^{-s})^{-1} \bigg) G(s) \\
 & = (1+2^{-s})L(s,\chi)\prod_{p} \frac{1-p^{-2s}}{1-p^{-s}} G(s) \\
 & = \frac{(1+2^{-s})L(s,\chi)\zeta(s)G(s)}{\zeta(2s)},
\end{align*}
where $\zeta(s)$ is the Riemann zeta function, $L(s,\chi)$ is the Dirichlet's L-function associated to the non-principal character $\chi$ modulo 4 defined by 
 \[
\chi(n) =
\begin{cases}
  0, & \text{if } n \text{ is even}, \\
  1, & \text{if } n \equiv 1 \bmod{4}, \\
  -1, & \text{if } n \equiv 3 \bmod{4},
\end{cases}
\]
and 
%$G(s)$ is defined as the following infinite product over primes of the form $4k+1$,
\begin{equation*}
    G(s):=\prod_{p \equiv 1 \bmod{4}} (1-p^{-2s})^{-1}.
\end{equation*}
We know that $L(s,\chi)$ is an entire function. $G(s)$ is holomorphic and non-vanishing in the region $\Re(s)> 1/2$.  In addition, $\zeta(s)$ is holomorphic in the region $\Re(s) \geq 1$ except for a simple pole at $s = 1$.  Hence, the function $(F(s))^2$ is holomorphic in the region $\Re(s) \geq 1$ except for a simple pole at $s = 1$ with residue

\begin{equation*}
    C = \frac{9}{4\pi}\prod_{p \equiv 1 \bmod{4}} (1-p^{-2})^{-1}.
\end{equation*}
Then applying \cite[Theorem 2]{IkeharaDelange} we get

\begin{equation}
     A(x) = |\{1\leq q \leq x \mid s_q \neq 0\}| \sim \frac{\alpha x}{\sqrt{\log x}},
\end{equation} 
where
\begin{equation*}
    \alpha =\frac{3}{2\pi}\prod_{p \equiv 1 \bmod{4}} (1-p^{-2})^{-1/2}.
\end{equation*}
\end{proof}
Next, we prove that the proportion of integers \( q \le x \) for which
    $| \omega(n_q) - \omega(\phi(q))| > 1 $  tends to zero as \( x \to \infty \).
\begin{prop} \label{Exset}
    For $q \geq 1$, let $n_q$ be the arithmetic function defined in Theorem \ref{scat} and $\omega(q)$ be the number of distinct prime divisors of $q$. Let $x \gg 0$ and consider the following set $E(x)$ defined by
 \begin{equation}\label{exdef}
  E(x):= \{ 1\leq q \leq x \mid  |\omega(n_q) -\omega(\phi(q))|> 1\}.   
 \end{equation}
 Then $|E(x)| = o(x)$ as $x \to \infty$.
\end{prop}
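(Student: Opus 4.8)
The plan is to leverage Proposition~\ref{sqes}, which already tells us that the ``exceptional'' set for the function $s_q$ is thin. Concretely, put $B(x) := \{\,1 \le q \le x \mid s_q \neq 0\,\}$; by Proposition~\ref{sqes} we have $|B(x)| = A(x) = O\!\left(x/\sqrt{\log x}\right) = o(x)$. The first step is therefore to reduce the claim to showing that $E(x) \subseteq B(x)$, since then $|E(x)| \le |B(x)| = o(x)$ is immediate.

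To carry out that reduction, I would show that every $q$ with $s_q = 0$ lies outside $E(x)$. Such a $q$ is automatically $\ge 3$ (because $s_1 = s_2 = 1$), so $\phi(q)$ is even and $n_q = \tfrac12(\phi(q) + s_q) = \tfrac12\phi(q)$ is a genuine positive integer. Writing $\phi(q) = 2^{a} m$ with $a \ge 1$ and $m$ odd, we have $n_q = 2^{a-1} m$; comparing prime factorisations gives $\omega(\phi(q)) = \omega(m) + 1$ and $\omega(n_q) = \omega(m) + \mathbf{1}[a \ge 2]$, whence $\omega(\phi(q)) - \omega(n_q) = \mathbf{1}[a = 1] \in \{0, 1\}$. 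In particular $|\omega(n_q) - \omega(\phi(q))| \le 1$, so $q \notin E(x)$, which establishes $E(x) \subseteq B(x)$ and hence the proposition.

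I do not anticipate a serious obstacle: beyond quoting Proposition~\ref{sqes}, the only ingredient is the trivial observation that dividing an even integer by $2$ alters $\omega$ by at most one. The one point that genuinely needs the sparsity input is that when $s_q \neq 0$ the numbers $n_q = \tfrac12(\phi(q)+s_q)$ and $\phi(q)$ can have very different numbers of prime factors (they need not share a single prime), so without the bound $|B(x)| = o(x)$ the statement would fail; fortunately Proposition~\ref{sqes} supplies exactly what is needed.
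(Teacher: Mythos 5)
Your proposal is correct and follows essentially the same route as the paper: split according to whether $s_q=0$ or $s_q\neq 0$, note that in the first case $n_q=\phi(q)/2$ so $\omega$ changes by at most one, and in the second case bound the count by $A(x)=o(x)$ from Proposition~\ref{sqes}. Your explicit computation with $\phi(q)=2^a m$ is just a more detailed version of the paper's ``easily verified'' step, so there is nothing substantive to add.
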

\begin{proof}
   Note that for $q=1,2$ one can check that $|\omega(n_q) -\omega(\phi(q))|\leq 1$. So, consider $3 \leq q \leq x$, there are two cases to consider, that is, whether $s_q=0$ or $s_q \neq 0$. If $s_q =0$, then $\omega(n_q) = \omega(\phi(q)/2)$ and it can easily be verified that $|\omega(\phi(q)/2) -\omega(\phi(q))|\leq 1$. Otherwise, for $s_q \neq 0$, we have
\begin{equation*}
    \bigg|\bigg\{1\leq q \leq x \mid  |\omega(n_q) -\omega(\phi(q))|> 1 \bigg\}\bigg| \leq  A(x) = |\{1\leq q \leq x \mid s_q \neq 0\}|
\end{equation*}
and the claim follows from Proposition \ref{sqes}.
\end{proof}

Next we state the Erd\H{o}s-Kac theorem for the Euler's $\phi$-function proved by Erd\H{o}s and Pomerance in \cite{Erdos1985-la}. We use this in the next section to prove the main theorem.

\begin{prop}(cf. \cite[Theorem 3.2]{Erdos1985-la}) \label{erdoskacphi}
For any $a \in \R$ we have,

\begin{equation*}
    \lim_{x \longrightarrow \infty} \frac{1}{x}\bigg|\bigg\{1\leq q \leq x \mid \omega(\phi(q)) - \frac{1}{2}(\log \log x)^2 \leq \frac{a}{\sqrt{3}}(\log \log x)^{3/2} \bigg\}\bigg| = \Phi(a).
\end{equation*}
    where $\displaystyle \Phi(a): = \frac{1}{\sqrt{2\pi}} \int_{-\infty}^a e^{-y^2/2}dy$ is the cumulative distribution function for the standard normal distribution.
\end{prop}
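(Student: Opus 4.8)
The plan is to reproduce the argument of Erd\H{o}s and Pomerance, whose guiding idea is that $\omega(\phi(q))$ is, up to a negligible discrepancy, governed by the double prime sum
\[
f(q):=\sum_{p\mid q}\omega(p-1),
\]
and that $f(q)$ behaves like a sum of weakly dependent random variables to which a central limit theorem applies. The factor $\tfrac12$ in the mean and the $\tfrac13$ in the variance will emerge from the geometry of the region $\{\ell\le p\}$ traversed by the relevant pairs of primes $(\ell,p)$, where $\ell$ runs over the prime divisors of $p-1$.

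First we would establish the reduction. Writing $\phi(q)=\prod_{p^{a}\|q}p^{a-1}(p-1)$, a prime $\ell$ divides $\phi(q)$ precisely when $\ell^{2}\mid q$ or some $p\mid q$ satisfies $p\equiv 1\pmod{\ell}$; hence $\omega(\phi(q))=\sum_{\ell}\mathbf{1}[\ell\mid\phi(q)]$ and $f(q)-\omega(\phi(q))$ equals the \emph{overcounting} term $\sum_{\ell}\max\!\big(0,\,N_{\ell}(q)-1\big)$, where $N_{\ell}(q)=\#\{p\mid q:\ell\mid p-1\}$, plus the contributions of the prime powers $p^{a-1}$ and of $\ell=2$. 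Modelling $N_{\ell}$ as Poisson with parameter $\lambda_{\ell}\approx\tfrac{\log\log x}{\ell}$, one checks that the mean of the overcounting term is $O\big(\log\log x\cdot\log\log\log\log x\big)=o\big((\log\log x)^{3/2}\big)$, with the small-$\ell$ range (where $\lambda_\ell\ge1$) dominating; the prime-power and $\ell=2$ contributions are $O(1)$ on average. Since these discrepancies are nonnegative with mean $o((\log\log x)^{3/2})$, Markov's inequality removes a set of $q$ of size $o(x)$, and on the complement $\omega(\phi(q))=f(q)+o((\log\log x)^{3/2})$.

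Next we would compute the first two moments of $f$. For the mean,
\[
\frac{1}{x}\sum_{q\le x}f(q)\sim\sum_{p\le x}\frac{\omega(p-1)}{p}
=\sum_{\ell}\ \sum_{\substack{p\le x\\ p\equiv 1\,(\ell)}}\frac{1}{p},
\]
and parametrising primes by $v=\log\log\ell$ and $u=\log\log p$, Mertens' theorem in arithmetic progressions shows the mass concentrates on the triangle $0\le v\le u\le\log\log x$, giving $\int_{0}^{L}\!\!u\,du=\tfrac12 L^{2}$ with $L:=\log\log x$; the constraint $\ell\le p$ is exactly what produces the factor $\tfrac12$. For the variance we write $f(q)=\sum_{p}\omega(p-1)\,\mathbf{1}[p\mid q]$ and treat the events $\{p\mid q\}$ as nearly independent Bernoulli variables of parameter $1/p$, so that the dominant contribution is $\sum_{p\le x}\tfrac{\omega(p-1)^{2}}{p}\sim\int_{0}^{L}u^{2}\,du=\tfrac13 L^{3}$, while the internal Erd\H{o}s--Kac fluctuation of each $\omega(p-1)$ and the covariances between distinct $p$ contribute only $O(L^{2})$. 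This yields variance $\sim\tfrac13(\log\log x)^{3}$, matching the normalisation in the statement.

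Finally we would prove Gaussianity by the method of moments, showing that the moments of the normalised variable $\big(f(q)-\tfrac12 L^{2}\big)/\big(\tfrac{1}{\sqrt3}L^{3/2}\big)$ converge to those of $\Phi$; equivalently, one evaluates $\tfrac1x\sum_{q\le x}f(q)^{k}$ as a sum over $k$-tuples of primes dividing $q$, where the squarefree (distinct-prime) tuples reproduce the Gaussian moments by the pairing combinatorics of the Kubilius model and the tuples with repetitions are shown to be lower order. The \textbf{main obstacle} is uniformity in the modulus: the inner sums $\sum_{p\le x,\;p\equiv 1\,(\ell)}1/p$ must be controlled not only on average but for $\ell$ ranging up to a power of $x$ as demanded by the higher moments, and it is here that an equidistribution input of Bombieri--Vinogradov strength (or a large-sieve bound for the $k$-th moment) is indispensable. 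Keeping every accumulated error term $o$ of the corresponding main term, simultaneously at all moment orders, is the technical heart of the argument; the function $f(q)$ is not itself additive in $q$, so one cannot simply quote a standard Kubilius--Shapiro theorem and must instead carry the two-layer prime structure through each estimate.
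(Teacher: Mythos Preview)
The paper does not prove this proposition at all: it is stated with the attribution ``(cf.\ \cite[Theorem 3.2]{Erdos1985-la})'' and introduced with ``Next we state the Erd\H{o}s--Kac theorem for the Euler's $\phi$-function proved by Erd\H{o}s and Pomerance,'' then used as a black box in the proof of Theorem~\ref{main}. So there is no proof in the paper to compare against; the authors simply import the result from the literature.

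Your proposal, by contrast, sketches the actual Erd\H{o}s--Pomerance argument. The outline is faithful to their method: the reduction from $\omega(\phi(q))$ to $f(q)=\sum_{p\mid q}\omega(p-1)$, the triangle-area heuristic producing the $\tfrac12 L^{2}$ mean and $\tfrac13 L^{3}$ variance, and the higher-moment computation for Gaussianity are all the right ingredients, and you correctly flag the Bombieri--Vinogradov-type uniformity as the genuine technical cost. As a sketch this is sound; if you were writing a self-contained paper you would need to fill in the moment estimates carefully (Erd\H{o}s and Pomerance in fact truncate and appeal to the ordinary Erd\H{o}s--Kac theorem rather than computing all moments directly, which is a somewhat lighter route than the one you describe), but for the purposes of this paper a citation suffices and that is exactly what the authors do.
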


 \section{Proofs}

In this section, we prove Theorem \ref{main}. To prove the claim, for sufficiently large $x$ and $a \in \R$, define 
\[
\begin{aligned}
I(q,x,a) &:= \begin{cases}1 & \text{if }\frac{\omega(\phi(q)) - f(x)}{g(x)} \le a,\\0 & \text{otherwise},\end{cases} & and  \ 
I_1(q,x,a) &:= \begin{cases}1 & \text{if }\frac{\omega(n_q) - f(x)}{g(x)} \le a,\\0 & \text{otherwise},\end{cases}
\end{aligned}
\]
where
$f(x):= \tfrac12(\log\log x)^2,$ \text{and} $g(x):=\tfrac1{\sqrt3}(\log\log x)^{3/2}$.

We get the required result by comparing the following sums;

\[
S(x,a) = \sum_{q \le x} I(q,x,a), \ and \ S_1(x,a) = \sum_{q \le x} I_1(q,x,a).
\]

By Proposition \ref{erdoskacphi},

\[
\lim_{x \to \infty} \frac{S(x,a)}x = \Phi(a) \quad\text{for every }a\in\mathbb R.
\]

It therefore suffices to show
\begin{equation}\label{eq:goal}
\lim_{x \to \infty} \frac{S_1(x,a)}x = \Phi(a) ,
\end{equation}
which is equivalent to show
\begin{equation}\label{eq:reduce}
\lim_{x \to \infty} \frac{T(x,a)}x = 0, \ \text{where} \ T(x,a):= S(x,a) - S_1(x,a).
\end{equation}

In order to estimate $T(x,a)$, we split its underlying sum into two parts, one corresponding to $q \in E(x)$ and the other corresponding to $q\notin E(x)$.

\medskip

\textbf{Case (I): Contribution from  $q \in E(x)$.} Recall that
\begin{equation*}
    E(x) = \{1 \le q \le x : |\omega(n_q) - \omega(\phi(q))| > 1\},
\end{equation*}

and by Proposition \ref{Exset} we have $|E(x)| = o(x)$ as $x \to \infty$. Define

\[
T_1(x,a) := \sum_{{q\le x} \atop {q\in E(x)}} \bigl(I(q,x,a) - I_1(q,x,a)\bigr).
\]

Since $|I(q,x,a) - I_1(q,x,a)|\le1$ for every $q,$ we have

\[
|T_1(x,a)| \le |E(x)| = o(x) \mbox{ as } x \longrightarrow \infty.
\]

\medskip

\textbf{Case (II): Contribution from $q \notin E(x)$.} Let

\[
T_2(x,a) = \sum_{{q\le x} \atop {q\notin E(x)}} \bigl(I(q,x,a) - I_1(q,x,a)\bigr).
\]

For $q\notin E(x)$ we have $|\omega(n_q)-\omega(\phi(q))|\le1$, so

\[
\left|\frac{\omega(n_q)-f(x)}{g(x)} - \frac{\omega(\phi(q))-f(x)}{g(x)}\right| \le \frac1{g(x)}.
\]

Hence, for $q \leq x, q\notin E(x)$ we have

\begin{equation} \label{phincomp}
    \frac{\omega(\phi(q)) - f(x)}{g(x)} - \frac{1}{g(x)} \leq \frac{\omega(n_q) - f(x)}{g(x)} \leq \frac{\omega(\phi(q)) - f(x)}{g(x)} + \frac{1}{g(x)}
\end{equation}
Now
\[
T_2(x,a) = \sum_{q\in E_1(x,a)}(I-I_1) + \sum_{q\in E_2(x,a)}(I-I_1),
\]

where
\[
\begin{aligned}
E_1(x,a) &:= \{q\le x:\,q\notin E(x),\,I(q,x,a)=1,\,I_1(q,x,a)=0\},\\
E_2(x,a) &:= \{q\le x:\,q\notin E(x),\,I(q,x,a)=0,\,I_1(q,x,a)=1\}.
\end{aligned}
\]

We now justify that both $E_1(x,a)$ and $E_2(x,a)$ lie within the following set defined by

\[
\left\{ q \le x : a - \frac{1}{g(x)} < \frac{\omega(\phi(q)) - f(x)}{g(x)} \le a + \frac{1}{g(x)} \right\}.
\]

Indeed, suppose $q \in E_1(x,a)$. Then $I(q,x,a) = 1$ and $I_1(q,x,a) = 0$, which implies

\[
\frac{\omega(\phi(q)) - f(x)}{g(x)} \le a \quad \text{and} \quad \frac{\omega(n_q) - f(x)}{g(x)} > a.
\]

These two inequalities along with \eqref{phincomp} tell us that $q \in E_1(x,a)$ implies

\[
a - \frac{1}{g(x)} < \frac{\omega(\phi(q)) - f(x)}{g(x)} \le a.
\]

A similar argument for $q \in E_2(x,a)$ gives

\[
a < \frac{\omega(\phi(q)) - f(x)}{g(x)} \le a + \frac{1}{g(x)}.
\]

Hence, in both cases, we have

\[
a - \frac{1}{g(x)} < \frac{\omega(\phi(q)) - f(x)}{g(x)} \le a + \frac{1}{g(x)},
\]

and thus both $E_1(x,a)$ and $E_2(x,a)$ are contained in

\[
\left\{ q \le x : a - \frac{1}{g(x)} < \frac{\omega(\phi(q)) - f(x)}{g(x)} \le a + \frac{1}{g(x)} \right\}.
\]
 Using Proposition \ref{erdoskacphi}, we have

\[
|E_1(x,a)| + |E_2(x,a)| = o(x) \ \text{as $x \to \infty$},
\]

and thus $T_2(x,a)=o(x) \ \text{as $x \to \infty$}$.

\medskip

Combining case (I) and case (II) yields

\[
T(x,a) = T_1(x,a) + T_2(x,a) = o(x) \ \text{as $x \to \infty$}.
\]

This establishes \eqref{eq:reduce} and completes the proof of Theorem \ref{main}. \qed

\vspace{1.0cm}
{\bf Acknowledgments:} P.P. Satpathy thanks NISER for providing a great working environment. S. Pujahari acknowledges support from the Science and Engineering Research Board [SRG/2023/000930].

 %\bibliographystyle{amsplain} 
%\bibliography{references}    

\begin{thebibliography}{1}

\bibitem{Anderson2005-ox}
James~W Anderson, \emph{Hyperbolic geometry}, 2 ed., Springer Undergraduate
  Mathematics Series, Springer, London, England, August 2005.

\bibitem{Erdoskac}
P.~Erd\H{o}s and M.~Kac, \emph{The gaussian law of errors in the theory of
  additive number theoretic functions}, American Journal of Mathematics
  \textbf{62} (1940), no.~1, 738--742.

\bibitem{Erdos1985-la}
Paul Erd\H{o}s and Carl Pomerance, \emph{On the normal number of prime factors
  of $\phi(n)$}, Rocky Mountain J. Math. \textbf{15} (1985), no.~2, 343--352.

\bibitem{Guillemin1976-xr}
Victor~W Guillemin, \emph{Sojourn times and asymptotic properties of the
  scattering matrix}, Publ. Res. Inst. Math. Sci. \textbf{12} (1976), no.~99,
  69--88.

\bibitem{HardyRamanujan1917}
G.~H. Hardy and S.~Ramanujan, \emph{The normal number of prime factors of a
  number n}, Quarterly Journal of Mathematics \textbf{48} (1917), 76--92.

\bibitem{IkeharaDelange}
Anthony~C. Kable, \emph{A variation of the {I}kehara-{D}elange {T}auberian
  theorem and an application}, Comment. Math. Univ. St. Pauli \textbf{57}
  (2008), no.~2, 137--146. \MR{2492577}

\bibitem{PMM}
M~Ram Murty, V~Kumar Murty, and Sudhir Pujahari, \emph{An all-purpose
  {Erd{\"o}s-Kac} theorem}, Math. Z. \textbf{305} (2023), no.~3.

\bibitem{pujaharipunya}
Sudhir Pujahari and Punya~Plaban Satpathy, \emph{Prime scattering geodesic
  theorem}, 2025, (communicated),
  \href{https://arxiv.org/abs/2505.04973}{arXiv:2505.04973}.

\end{thebibliography}
\providecommand{\bysame}{\leavevmode\hbox to3em{\hrulefill}\thinspace}
\providecommand{\MR}{\relax\ifhmode\unskip\space\fi MR }
% \MRhref is called by the amsart/book/proc definition of \MR.
\providecommand{\MRhref}[2]{%
  \href{http://www.ams.org/mathscinet-getitem?mr=#1}{#2}
}
\providecommand{\href}[2]{#2}

\end{document}